\newcommand{\cA}{\mathcal{A}}
\newcommand{\cB}{\mathcal{B}}
\newcommand{\cP}{\mathcal{P}}
\newcommand{\IC}{\mathbb{C}}
\newcommand{\IP}{\mathbb{P}}
\newcommand{\IZ}{\mathbb{Z}}
\newcommand{\IN}{\mathbb{N}}
\newcommand{\kk}{\mathbf{k}}
\newcommand{\blank}{\,\rule[0pt]{0.66em}{0.4pt}\,} 
\newcommand{\eqqcolon}{=\mathrel{\mathop:}}
\newcommand{\reg}{\mathcal O}
\newcommand{\std}{\mathrm{std}}
\DeclareMathOperator{\Ho}{\mathsf H}
\mathchardef\mhyphen="2D
\DeclareMathOperator{\Hom}{\mathsf{Hom}}
\DeclareMathOperator{\Aut}{Aut}
\DeclareMathOperator{\id}{\mathsf{id}}
\DeclareMathOperator{\pr}{pr}
\DeclareMathOperator{\supp}{supp}
\let\deg\relax
\DeclareMathOperator{\deg}{\mathsf{deg}}
\DeclareMathOperator{\Symm}{Sym}
\newcommand{\arXiv}[1]{\href{https://arxiv.org/abs/#1}{\texttt{arXiv:#1}}}
\newcommand{\hyref}[2]{\hyperref[#2]{#1~\ref*{#2}}}
\newcommand{\sym}{\mathfrak S}
\newtheorem{theorem}{Theorem}[section]
\newtheorem*{maintheorem}{Main Theorem}
\newtheorem*{theorem*}{Theorem}
  \newaliascnt{proposition}{theorem}
  \newtheorem{proposition}[proposition]{Proposition}
  \newaliascnt{lemma}{theorem}
  \newtheorem{lemma}[lemma]{Lemma}
  \newaliascnt{corollary}{theorem}
\theoremstyle{definition}
  \newaliascnt{definition}{theorem}
  \newaliascnt{remark}{theorem}
  \newtheorem{remark}[remark]{Remark}
  \newaliascnt{setup}{theorem}
  \newaliascnt{question}{theorem}
  \newaliascnt{example}{theorem}
  \def\equationautorefname#1#2\null{(#2\null)}
\begin{document}

\title{Asymmetry of $\IP$-functors}
\author[A. Hochenegger]{Andreas Hochenegger}
\address{
Dipartimento di Matematica ``Francesco Brioschi'',
Politecnico di Milano,
via Bonardi 9,
20133 Milano,
Italy
}
\email{andreas.hochenegger@polimi.it}
\author[A. Krug]{Andreas Krug}
\address{
Institut f\"ur algebraische Geometrie,
Gottfried Wilhelm Leibniz Universit\"at Hannover,
Welfengarten 1,
30167 Hannover,
Germany
}
\email{krug@math.uni-hannover.de}

\subjclass[2010]{primary: 18E30, 14F05, secondary: 14C05, 18A22}
\keywords{Adjoints of $\IP$-functors; $\IP$-objects; big (anti-)canonical bundles}

\maketitle

\begin{abstract}
Recently, a new definition of $\IP$-functors was proposed by Anno and Logvinenko.
In their article, the authors wonder whether this notion is symmetric in the sense that the adjoints of $\IP$-functors are again $\IP$-functors, the analogue being true for spherical functors.
We give geometric examples involving the Hilbert scheme of points on a surface that yield a negative answer.
\end{abstract}

\section{Introduction}

In order to understand the derived category $D(X)$ of a smooth, projective variety $X$,
the group of autoequivalences $\Aut(D(X))$ plays a crucial role.

There are the so-called standard autoequivalences $\Aut^\std(D(X))$, coming from the shift, pullback by an automorphism of $X$, and twisting with an invertible sheaf. Bondal and Orlov showed in \cite{BonOrl} that, if $\omega_X$ is ample or anti-ample, these are already all autoequivalences.

Things become more interesting if $\omega_X$ is neither ample or anti-ample. In that case, there can be autoequivalences that are not standard.
Seidel and Thomas showed in \cite{ST}, how to associate a spherical twist to a spherical object, which turned out to be a new kind of autoequivalence.
Their notion was generalised in several ways. On the one hand, Huybrechts in Thomas introduced $\IP$-twists associated to $\IP$-objects, see \cite{HT}. On the other hand, the study of spherical objects in families led to the notion of a spherical functor and its spherical twist, see, for example, \cite{ALdg}.
In \cite{Segal}, Segal gives a construction, how to realise any given autoequivalence as the twist of a spherical functor. But this construction gives a spherical functor whose source category will almost never be of geometric origin. This means that one cannot hope to find all autoequivalences of $D(X)$ only by looking out for spherical functors.

So it still is useful to look for other generalisations, like the $\IP$-twist associated to a $\IP$-functor.
A first (split) version was introduced independently by Addington in \cite{Add} and by Cautis in \cite{Cautis--flopsguide}.
Recently, this notion was generalised to (non-split) $\IP$-functors by Anno and Logvinenko in \cite{AL-P}.

We recall the essential part of this new definition of a $\IP^n$-functor.
Let $F \colon \mathcal A \to \mathcal B$ be an enhanced exact functor of enhanced triangulated categories (e.g. a Fourier-Mukai functor between two derived categories), which admits both adjoints $L, R \colon \mathcal B \to \mathcal A$.
In order to say that $F$ is a $\IP^n$-functor, we need that $RF$ is filtered by powers of an autoequivalence $H$ of $\mathcal A$. In categorical terms, this means that there is a Postnikov system of the form
\begin{equation}
\label{eq:postnikov}
\begin{tikzcd}
\id \ar[rr] && Q_1 \ar[rr] \ar[dl] && \cdots \ar[rr] \ar[dl] && Q_n \ar[ld]\\
& H \ar[dashed, ul] && H^2 \ar[dashed, ul] & \cdots& H^n \ar[dashed, ul]
\end{tikzcd}
\end{equation}
with $Q_i$ endofunctors of $\mathcal A$ and $Q_n \cong RF$.
There are more conditions to satisfy, see \cite[Def.~4.1]{AL-P}, but this part is all we need to know here.

There, Anno and Logvinenko ask whether their notion of a $\IP$-functor is symmetric, that is, whether adjoints of a $\IP$-functor are again $\IP$-functors.
This holds for spherical functors, but not for the split version of $\IP$-functors. Still, cyclic covers give rise to adjoint pairs of (genuinely non-split) $\IP$-functors, see \cite[\S 7.3]{AL-P}.
So it is natural to ask, whether this holds in general for their new notion.

We answer this question negatively, confirming the expectation stated in \cite{AL-P}.

\begin{maintheorem}
There are $\IP$-functors whose adjoints are not $\IP$-functors.
\end{maintheorem}

We show that there are examples of $\IP^n$-functors $F$, such that there is no possibility to realise a Postnikov system as in \autoref{eq:postnikov} for the adjoints $L,R$.
More precisely, we show that the adjoints are not $\IP^m$-functors for any $m\in\IN$ and any autoequivalence $H$ of $\mathcal B$ (so we rule out not only the case that $m=n$ and $H$ is the $\IP$-twist of $F$, which is what one might expect from the symmetry result for spherical functors; see \cite[Thm.~1.1]{ALdg}).

As a $\IP^n$-functor, we take the most simple there is, namely,
\[
F = P \otimes (\blank) \colon D(\kk) \to D(X)
\]
associated to a single $\IP^n$-object $P \in D(X)$.
For the counterexample, we choose a smooth projective $X$ of dimension $2n$ with big (anti-)canonical sheaf such that there is a $\IP^n$-object $P$ with  $\supp(P) \subsetneq X$.

The bigness gives us some control over the autoequivalences of $D(X)$, see \autoref{prop:bigauto}. This in turn allows us to rule out a Postnikov system as above for the adjoints of $F$ which is the main result \autoref{thm:main}.

In \autoref{sec:example}, we show that there are varieties satisfying the conditions above. There we start with a surface $S$ with big (anti-)canonical sheaf, such that there is a spherical object $E \in D(S)$ with one-dimensional support (for example, the second Hirzebruch surface). Then its Hilbert scheme of $n$ points has again a big (anti-)canonical sheaf and contains a $\IP^n$-object, to which \autoref{thm:main} applies.

\subsection*{Conventions and notations}

We denote by $\kk$ an algebraically closed field of characteristic zero.

For a variety $X$ over $\kk$, $D(X)$ denotes the bounded derived category of coherent sheaves on $X$, which is a triangulated category.
We write $\Hom^*(A,B)$ for the graded vector space $\bigoplus_{i\in\IZ} \Hom(A,B[i])[-i]$ of derived homomorphisms in $D(X)$.

All functors between derived categories are meant to be exact. In particular, we abuse notation and use $\otimes$  for the derived functor $\otimes^L$, that is, we use the same symbol as for the functor of coherent sheaves.

\section{Adjoints of $\IP$-functors are not necessarily $\IP$-functors}
\label{sec:main}

Before turning to the main result, we prove a statement about autoequivalences of $D(X)$, where $X$ is a smooth projective variety with big \mbox{(anti-)}canonical sheaf.
This is close in spirit to Kawamata's result \cite[Thm.~1.4]{KawamataDK} that D-equivalence implies K-equivalence for varieties of this type.

\begin{proposition}
\label{prop:bigauto}
Let $X$ be a smooth projective variety with either $\omega_X$ or $\omega_X^{-1}$ big. Then, for every $H\in \Aut(D(X))$, there is an $d\in \IZ$, non-empty open subsets $U_H,W_H\subset X$, and an isomorphism $\phi\colon U_H\xrightarrow\sim W_H$ with
\[
 H(\reg_x)=\reg_{\phi(x)}[d] \quad\text{for all $x\in U_H$.}
\]
\end{proposition}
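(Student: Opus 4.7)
The plan is to adapt the Bondal--Orlov/Kawamata philosophy: represent $H$ as a Fourier--Mukai transform with kernel $\mathcal P\in D(X\times X)$, exploit the fact that every autoequivalence commutes with the Serre functor, and then use bigness of $\omega_X^{\pm 1}$ to force $\mathcal P$ to be generically supported on the graph of an isomorphism between opens of $X$.

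The starting observation is that $H\circ S_X\cong S_X\circ H$ combined with $S_X(\reg_x)\cong \reg_x[\dim X]$ implies that $P_x:=H(\reg_x)$ satisfies $P_x\otimes\omega_X\cong P_x$. Since $H$ is an equivalence, $P_x$ inherits $\Hom^0(P_x,P_x)=\kk$ and $\Hom^{<i}(P_x,P_x)=0$ for $i<0$, so $P_x$ is a point-like object in the sense of Bondal--Orlov. The task reduces to showing that, for $x$ in a dense open, $P_x$ is a shifted skyscraper.

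Assume $\omega_X$ is big (the anti-canonical case is symmetric) and fix $m\gg 0$ so that the linear system $|\omega_X^m|$ defines a rational map $\mu\colon X\dashrightarrow\IP^N$ that is an isomorphism onto its image on a dense open $U_0\subset X$. On $Z_x:=\supp P_x$ one has $\omega_X^m|_{Z_x}\cong\reg_{Z_x}$, so, after fixing a trivialisation, the pullback of any section of $\omega_X^m$ to $Z_x$ becomes a section of a trivial line bundle, i.e.\ a locally constant function. Since $\mu$ separates points of $U_0$, each connected component of $Z_x\cap U_0$ is forced into one fibre of $\mu$, hence collapses to a single closed point. At such a smooth point, the point-like conditions on $P_x$ then yield $P_x\cong \reg_{\phi(x)}[d_x]$ by the standard local Bondal--Orlov argument (the bounded complex $P_x$ has a single non-vanishing cohomology sheaf, which is a simple Artinian module, hence $\reg_{\phi(x)}$).

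Finally, one globalises: let $U_H\subset X$ be the open locus of those $x$ with $Z_x$ a single point in $U_0$. The Fourier--Mukai kernel $\mathcal P$ restricted to $U_H\times X$ is then, up to the shift, the structure sheaf of the graph of a morphism $\phi\colon U_H\to X$, and applying the whole argument to $H^{-1}$ produces an inverse on $W_H:=\phi(U_H)$, so $\phi$ is an isomorphism onto its image; upper-semicontinuity of cohomological dimensions pins the shift $d_x=d$ down uniformly on the connected variety $U_H$. The technical heart, and main obstacle, is the bigness step: one must carefully handle possibly non-reduced and reducible $Z_x$ and obtain the conclusion for \emph{every} $x$ in an open subset, not merely for very general $x$. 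This is essentially Kawamata's argument \cite{KawamataDK} transposed from the $D$-equivalence setting to the autoequivalence setting.
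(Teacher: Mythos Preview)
Your overall strategy is the right one and coincides with the paper's: represent $H$ by a Fourier--Mukai kernel, use that $H$ commutes with the Serre functor so that each $P_x=H(\reg_x)$ is a point-like object with $P_x\otimes\omega_X\cong P_x$, and then invoke bigness of $\omega_X^{\pm1}$ to force $P_x$ to be a shifted skyscraper for $x$ in a non-empty open.

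However, there is a genuine gap at the crucial step. Your bigness argument only shows that the Iitaka-type map $\mu$ contracts each connected component of $Z_x=\supp P_x$ to a point; it says nothing about components of $Z_x$ lying in the base or exceptional locus of $\mu$, and it does not preclude \emph{all} of $Z_x$ from sitting there. Consequently, the set $U_H=\{x: Z_x \text{ is a single point of } U_0\}$ has not been shown to be non-empty. You flag this yourself as ``the technical heart, and main obstacle'' and defer to Kawamata, but the argument as written does not close the gap; the fiberwise analysis of $P_x$ alone is not enough.

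The paper fills exactly this hole by working globally on the kernel $\mathcal P$ rather than fiber by fiber. Following the proof of \cite[Prop.~6.19]{Huy}, bigness forces some irreducible component $Z\subset\supp(\mathcal P)$ to map \emph{birationally} to $X$ under both projections, and since $H$ is an equivalence, the remaining components have proper image under $\pr_1$ by \cite[Cor.~6.12]{Huy}. Removing these proper images and the exceptional loci of $\pr_{i|Z}$ yields a genuinely non-empty open $U_H$ on which $\supp(\mathcal P)\cap(\{x\}\times X)$ is a single point; then \cite[Lem.~4.5]{Huy} and \cite[Cor.~6.14]{Huy} give the shifted skyscraper description and the isomorphism $\phi$ with constant shift. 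In short, the missing idea is the component decomposition of $\supp(\mathcal P)$, not further analysis of individual $P_x$; once you import that step (or simply cite the relevant parts of \cite[\S 6]{Huy}), your sketch becomes the paper's proof.
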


\begin{proof}
The statement can be extracted from the proof of \cite[Thm.~1.4]{KawamataDK} as presented in \cite[\S 6]{Huy}.
Let us explain how exactly.

Let $\cP\in D(X\times X)$ be the Fourier--Mukai kernel of $H$, and write $\pr_i\colon X\times X\to X$ with $i=1,2$ for the projections to the factors.
By the proof of \cite[Prop.~6.19]{Huy}, there is an irreducible component $Z$ of $\supp(\cP)$ such that $\pr_{i\mid Z}\colon Z\to X$ is birational for both $i=1$ and $i=2$. (Explicitly, this is stated on top of page 149 of \emph{loc.\ cit.}\ for the normalisation $\widetilde Z$ in place of $Z$. But as $\widetilde Z\to Z$ is birational, it holds also for $Z$.)

Let $Z_1,\dots, Z_k$ be the further irreducible components of $\supp(\cP)$.
By \cite[Cor.~6.12]{Huy}, we have $\pr_1(Z_1), \dots,\pr_1(Z_k)\subsetneq X$. 
We define $U_H\subset X$ as the non-empty open subset which we optain by removing the $\pr_1(Z_j)$ for $j=1,\dots k$ as well as the images under $\pr_1$ of the exceptional loci of $\pr_{i\mid Z}$ for $i=1,2$. Then, for $x\in U_H$, the intersection $\supp(\cP)\cap(\{x\}\times X)$ consists of a single point. Hence $H(\reg_x)$ is supported on a single point $y\in X$. 
As $H(\reg_x)$ is a point-like object, we have $H(\reg_x)\cong \reg_y[d]$ for some $d\in \IZ$; see \cite[Lem.~4.5]{Huy}. Now, the assertion follows by \cite[Cor.~6.14]{Huy}.
\end{proof}

For the following theorem, we recall that an object $P$ in some enhanced triangulated category $\mathcal T$ is a \emph{$\IP^n$-object} (as introduced by Huybrechts and Thomas \cite{HT}) if 
\begin{itemize}
\item $\Hom^*(P,P) \cong \kk[t]/t^{n+1}$ as graded algebras with $\deg(t)=2$;
\item $\Hom^*(P,\blank) \cong \Hom^*(\blank,P[2n])^\vee$, that is, $P$ is a $2n$-Calabi--Yau object.
\end{itemize}
Such a $\IP^n$-object gives rise to a (split) $\IP^n$-functor $F = P \otimes (\blank) \colon D(\kk) \to \mathcal T$  in the sense of Addington \cite{Add} and Cautis \cite{Cautis--flopsguide}, and therefore $F$ is also a $\IP^n$-functor in the general sense of Anno and Logvinenko \cite{AL-P}.

\begin{theorem}
\label{thm:main}
Let $X$ be a $2n$-dimensional smooth projective variety with $n$ even and $\omega_X$ or $\omega_X^{-1}$ big.
Let $P\in D(X)$ be a $\IP^n$-object with $\supp(P)\subsetneq X$.
Then neither of the adjoints of the $\IP^n$-functor $F = P \otimes (\blank) \colon D(\kk) \to D(X)$ has the structure of a $\IP$-functor.
\end{theorem}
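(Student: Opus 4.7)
The strategy is to argue by contradiction. Suppose one of the adjoints of $F$ admits a $\IP^m$-functor structure with autoequivalence $H\in\Aut(D(X))$. Because the $2n$-Calabi--Yau property of $P$ identifies the right and left adjoints up to a shift by $[-2n]$, it suffices to treat the right adjoint $R$. The $\IP^m$-functor hypothesis then yields a Postnikov system
\[
\id\to Q_1\to\cdots\to Q_m
\]
of endofunctors of $D(X)$ whose successive graded pieces are $H^i$ and with $Q_m$ isomorphic to $FR[2n]$ (corrected by the Serre shift).

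To derive a contradiction, I plan to probe this Postnikov on point sheaves. Iterating \autoref{prop:bigauto}, every power $H^i$ acts on $\reg_x$ as $H^i(\reg_x)\cong\reg_{\phi^i(x)}[id]$ for $x$ in a common dense open $U\subset X$, with $\phi$ a birational self-map of $X$ and $d\in\IZ$. Exploiting $\supp(P)\subsetneq X$, I further restrict to a nonempty open $V\subset U$ on which $\phi^i(x)\notin\supp(P)$ for every $i=0,\dots,m$. For such $x$, $\Hom^*(P,\reg_x)=0$, so $FR(\reg_x)=0$ and hence $Q_m(\reg_x)=0$. The Postnikov thereby exhibits the zero object as an iterated extension of the skyscrapers $\reg_{\phi^i(x)}[id]$, $i=0,\dots,m$.

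If the $m+1$ orbit points $\phi^i(x)$ are pairwise distinct, all morphism groups between the graded pieces vanish by disjointness of supports, so the filtration splits as $\bigoplus_{i=0}^m\reg_{\phi^i(x)}[id]\ne0$, contradicting $Q_m(\reg_x)=0$. This disposes of every $H$ whose induced $\phi$ has infinite birational order (and in particular of $\phi$ with finite order exceeding $m$).

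The main obstacle is the remaining case of finite birational order $k\le m$. Here I would first decompose $0=Q_m(\reg_x)$ as a direct sum indexed by the $k$ distinct orbit points of $x$, then inspect the induced sub-filtration at each orbit point through its spectral sequence: a direct bidegree computation shows the non-zero $E_1$-entries lie on a single ``support line'', and two of them can be connected by a differential only when $rd=-1$ with $r$ a multiple of $k$, forcing $d=-1$ and $k=1$. The lone remaining degenerate case $\phi=\id$, $d=-1$ is then settled by descending to Fourier--Mukai kernels: every $K_{H^i}$ is generically supported on the diagonal of $X\times X$, so is $K_{Q_m}$, whereas the kernel of $FR[2n]$ equals $P\boxtimes P^\vee[2n]$ and is supported on $\supp(P)\times\supp(P)$, which is not contained in the diagonal as soon as $\supp(P)$ is positive-dimensional. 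The hypothesis ``$n$ even'' (in particular $n\ge2$) should intervene at this final step to prevent $P$ from being supported at a single point, which would otherwise salvage the Postnikov.
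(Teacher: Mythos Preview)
Your skyscraper probe is the right opening move and matches the paper's first step, but the endgame has real gaps. The spectral-sequence reduction does not land on $d=-1$: with the triangles $Q_{i-1}\to Q_i\to H^i\to Q_{i-1}[1]$, the induced differential runs $G_i\to G_{i-r}[1]$, so an isomorphism between shifted skyscrapers requires $rd=+1$, hence $d=1$ and $k=1$. More to the point, this surviving case $(\phi,d)=(\id,1)$ with $m$ odd \emph{cannot} be excluded by looking only at $\reg_x$ with $x\notin\supp(P)$: the zero object genuinely admits a Postnikov tower with graded pieces $\reg_x,\reg_x[1],\dots,\reg_x[m]$ (pair consecutive pieces via the identity map). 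Your Fourier--Mukai kernel argument does not rescue this. From the Postnikov one only obtains $\supp(K_{Q_m})\subset\bigcup_i\supp(K_{H^i})$, and the $K_{H^i}$ are merely \emph{generically} supported on the diagonal; over non-generic loci they may carry extra components that can perfectly well account for $\supp(P)\times\supp(P)$. Conversely, the diagonal contributions of the $K_{H^i}$ can (and here must) cancel, so no inclusion of the diagonal in $\supp(K_{Q_m})$ follows either. In short, probing only at generic skyscrapers---equivalently, along the generic diagonal---yields at best that $m$ is odd; it cannot finish the argument, and ``$n$ even'' plays no role in your outline (it certainly does not prevent $\supp(P)$ from being a point).

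The paper closes the gap with a second probe, at $P$ itself, carried out in $K_0(X)$. The Postnikov gives $[Q_m(\blank)]=\sum_{i=0}^m[H^i(\blank)]$. Plugging in a generic $\reg_x$ and passing to Mukai vectors in $H^*(X)$ yields precisely that $m$ is odd (your skyscraper step, streamlined). Plugging in $P$ then gives $(n+1)[P]=\sum_i[H^iP]$; comparing Euler self-pairings, the left side is $(n+1)^2\chi(P,P)=(n+1)^3$, which is odd because $n$ is even, while the right side equals $(m+1)\chi(P,P)+2\sum_{i<j}\chi(H^iP,H^jP)$, which is even because $m+1$ is even and the cross terms pair up by the $2n$-Calabi--Yau symmetry of $P$. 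This parity clash is the actual contradiction, and it is exactly here that the hypothesis ``$n$ even'' is used.
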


\begin{proof}
As the right and left adjoint of $F$ differ only by precomposition with the Serre functor $S_X$ (see \cite[Rem.~1.31]{Huy} and note that the Serre functor of $D(\kk)$ is the identity),
we can focus on one of them, say on the right adjoint $R = \Hom^*(P,\blank)$.

We assume the contrary, that is, that $R$ admits the structure of a $\IP^m$-functor for some $m$.
In particular, there is a Postnikov system of the following form
\[
\begin{tikzcd}
\id \ar[rr] && Q_1 \ar[rr] \ar[dl] && \cdots \ar[rr] \ar[dl] && Q_m \ar[ld]\\
& H \ar[dashed, ul] && H^2 \ar[dashed, ul] & \cdots& H^m \ar[dashed, ul]
\end{tikzcd}
\]
for some autoequivalence $H$ of $D(X)$. 
Note that we have an isomorphism $Q_m \cong R^R R$, where $R^R$ denotes the right adjoint of $R$.
We compute that $R^R = P \otimes \omega_X[2n] \otimes (\blank) \cong P[2n] \otimes (\blank)$,
so $Q_m \cong \Hom^*(P,\blank)\otimes P[2n]$.

Passing to the  Grothendieck group $K_0(X)$, the Postnikov system gives the equation
\begin{equation}
\label{eq:postnikov-K}
[Q_m (\blank)] = \sum_{i=0}^m [H^i(\blank)]\,.
\end{equation}

By \autoref{prop:bigauto}, there is a birational map $\phi\colon X\dashrightarrow X$, a non-empty open subset $U\subset X$, and some  $d\in \IZ$ with
\begin{equation}\label{eq:sky}
 H^i(\reg_x)\cong \reg_{\phi^i(x)}[di] \quad\text{for all $i=0,\dots m$ and all $x\in U$.}
\end{equation}
Concretely, start with $\phi\colon U_1\coloneqq U_H\to W_H\eqqcolon W_1$ as produced by \autoref{prop:bigauto}. Then define inductively $W_{i+1}\coloneqq W_i\cap U_i$ and $U_{i+1}\coloneqq \phi^{-1}(W_{i+1})$. Finally, set $U\coloneqq U_m$.

We now plug the skyscaper sheaf $\reg_x$ of some point $x\in U\setminus \supp(P)$ into \autoref{eq:postnikov-K}. We have
$Q_m(\reg_x) = 0$ as $\Hom^*(P,\reg_x)=0$ and
$[H^i(\reg_x)] = (-1)^{i \cdot d} [\reg_{\phi^i(x)}]$ by applying \autoref{eq:sky}.
Passing to cohomology via the Mukai vector, the images of all skyscraper sheaves of points become equal (and non-zero, namely a generator of $H^{4n}(X,\IC)$).
Hence \autoref{eq:postnikov-K} turns into
\[
0 = \sum_{i=0}^m (-1)^{i \cdot d} v(\reg_x) =
\begin{cases}
(m+1) v(\reg_x) & \text{if $d$ is even;}\\
v(\reg_x) & \text{if $d$ is odd and $m$ even;}\\
0 & \text{if $d$ and $m$ are odd.}
\end{cases}
\]
In particular, we conclude that $m$ has to be odd.

Next we plug our $\IP$-object $P$ into \autoref{eq:postnikov-K}.
For this we compute that $Q_m(P) =(\kk[t]/t^{n+1})\otimes P [2n]$ with $\deg(t)=2$, so we have
\[
A \coloneqq (n+1) [P] = \sum_{i=0}^m [H^i(P)] \eqqcolon B
\]
Computing the Euler pairing with itself on both sides gives:
\[
\begin{split}
\chi(A,A) &= (n+1)^3 \\
\chi(B,B) &= \sum_{i=0}^m \chi(H^i(P),H^i(P)) + \sum_{0\leq i \neq j \leq m} \chi(H^i(P),H^j(P))  \\
&= (m+1) \chi(P,P) +   2 \cdot \sum_{0\leq i < j \leq m} \chi(H^i(P),H^j(P))
\end{split}
\]
where we use that $H$ is an autoequivalence, hence commuting with the Serre functor, and that $P$ is a $2n$-Calabi--Yau object, in order to conclude that $\chi(H^i(P),H^j(P)) = \chi(H^j(P),H^i(P))$.
Note that $\chi(A,A)$ is necessarily odd (as $n$ is even by assumption), but $\chi(B,B)$ is even (as $m$ is already shown above to be odd), which is the desired contradiction.
\end{proof}

\begin{remark}
Let $F \colon \cA \to \cB$ be a functor with both adjoints.
We might call $F$ a \emph{$\IP^m$-like functor} if it admits a filtration of $RF$ by powers of an autoequivalence $H$ like in \autoref{eq:postnikov}, following the terminology introduced in \cite{HKP} and \cite{HM}.
What then turns a $\IP$-like functor into a $\IP$-functor are the additional conditions of \cite[Def.~4.1]{AL-P}.

So in the proof of \autoref{thm:main}, we showed more generally, that the adjoints of the $\IP^n$-functor $F = P \otimes (\blank)$ are not even $\IP$-like functors (so in particular not $\IP$-functors).
\end{remark}

\section{Examples}
\label{sec:example}

Let $S$ be a surface with big (anti-)canonical sheaf together with a spherical object $E\in D(S)$ such that $\supp(E)\subsetneq X$.

For example, the second Hirzebruch surface $\Sigma_2$ has a big anti-canonical sheaf (as any smooth, projective, toric variety) and contains a $(-2)$-curve $C$, that is, $C \cong \IP^1$ and $C^2=-2$.
In particular, $\reg_C$ is a spherical object in $D(\Sigma_2)$.
See, for example, \cite{AutoToric}.

For such a spherical object $E \in D(S)$, we find that $E^{\boxtimes n}\in D_{\sym_n}(S^n)$ is a $\IP^n$-object for every $n\in\IN$; see \cite[\S 4]{PS}. Under the derived McKay correspondence $D_{\sym_n}(S^n)\cong D(S^{[n]})$ this translates to a $\IP^n$-object $P\in D(S^{[n]})$ with $\supp(P)\subsetneq X\coloneqq S^{[n]}$; compare \cite[\S 6]{Formality}. Hence, together with the following lemma, we obtain examples that satisfy the hypotheses of \autoref{thm:main}.

\begin{lemma}
 Let $S$ be a smooth projective surface with big (anti-)canonical sheaf. Then, for every $n\in \IN$, also the (anti-)canonical sheaf of the Hilbert scheme $S^{[n]}$ of $n$ points on $S$ is big.
\end{lemma}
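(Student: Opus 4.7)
The plan is to use Fogarty's construction that assigns to each line bundle $L$ on $S$ a line bundle $L_{[n]}$ on $S^{[n]}$: the $\sym_n$-linearised line bundle $L^{\boxtimes n}$ on $S^n$ descends along the quotient $S^n\to S^{(n)}$ to a line bundle, and $L_{[n]}$ is its pullback along the Hilbert--Chow morphism $\mu\colon S^{[n]}\to S^{(n)}$. Since $\mu$ is crepant, one has $\omega_{S^{[n]}}\cong (\omega_S)_{[n]}$ (as reviewed e.g.\ in \cite{Formality}), and since $L\mapsto L_{[n]}$ is a group homomorphism, also $\omega_{S^{[n]}}^{-1}\cong (\omega_S^{-1})_{[n]}$. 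It therefore suffices to prove the following general statement: if $L$ is big on $S$, then $L_{[n]}$ is big on $S^{[n]}$. The lemma then follows by applying this with $L=\omega_S$ or $L=\omega_S^{-1}$.

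The first step is to compute global sections of all tensor powers of $L_{[n]}$. The Hilbert--Chow morphism resolves the rational singularities of $S^{(n)}$, so $\mu_*\reg_{S^{[n]}}=\reg_{S^{(n)}}$, and the projection formula together with the standard identification of sections on $S^{(n)}$ with $\sym_n$-invariant sections on $S^n$ yields
\[
H^0(S^{[n]},L_{[n]}^{\otimes m})=H^0\bigl(S^n,(L^{\otimes m})^{\boxtimes n}\bigr)^{\sym_n}=\mathrm{Sym}^n H^0(S,L^{\otimes m}).
\]

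The second step is a purely numerical asymptotic estimate. By definition, $L$ being big on the surface $S$ means $h^0(S,L^{\otimes m})$ grows at rate $m^2$ for $m\to\infty$. Hence
\[
h^0(S^{[n]},L_{[n]}^{\otimes m})=\binom{h^0(S,L^{\otimes m})+n-1}{n}
\]
grows at rate $m^{2n}=m^{\dim S^{[n]}}$, which is precisely the definition of bigness of $L_{[n]}$.

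The only non-formal input I expect to need is the crepancy identification $\omega_{S^{[n]}}\cong (\omega_S)_{[n]}$ and the vanishing $\mu_*\reg_{S^{[n]}}=\reg_{S^{(n)}}$; both are classical results in the theory of Hilbert schemes of points on surfaces (Fogarty, Beauville) which can simply be quoted. Once these are in place, the rest reduces to the elementary dimension count above, so I do not foresee any genuine obstacle.
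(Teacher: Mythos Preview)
Your proposal is correct and follows essentially the same route as the paper: define the induced line bundle on $S^{[n]}$ via descent to $S^{(n)}$ and pullback along the Hilbert--Chow morphism, identify its global sections with $\mathrm{Sym}^n H^0(S,L^{\otimes m})$, and then run the binomial growth estimate to conclude bigness, finishing with the identification $\omega_{S^{[n]}}\cong(\omega_S)_{[n]}$. The only cosmetic differences are that the paper packages the section computation as an ``equivariant K\"unneth formula'' rather than projection formula plus $\mu_*\reg=\reg$, and cites \cite[Prop.~1.6]{NW} for the canonical bundle identification rather than crepancy.
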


\begin{proof}
For an invertible sheaf $L$ on $S$, there is the associated invertible sheaf
\[
 L_{n}\coloneqq \mu^*(\pi_*^{\sym_n}L^{\boxtimes n})
\]
on $S^{[n]}$, where $\mu\colon S^{[n]}\to S^{(n)}\coloneqq S^n/\sym_n$ is the Hilbert--Chow morphism and  $\pi_*^{\sym_n}$ is taking invariants of the push-forward under the quotient morphism $\pi\colon S^n\to S^{(n)}$.

By the equivariant K\"unneth formula we find that
\begin{equation}\label{eq:globalsec}
\Ho^0(S^{[n]}, L_n)\cong \Ho^0(S^n, L^{\boxtimes n})^{\sym_n} \cong   \Symm_n \Ho^0(S,L)\,.
\end{equation}
Suppose that $L$ is a big invertible sheaf, this means that the growth of $\Ho^0(S,L^k)$ is of order $k^{2}$. Then \autoref{eq:globalsec} gives that the growth of $\Ho^0(S^{[n]},L_n^k)$ is of order $k^{2n}$, since
\[
\dim( \Symm_n (\kk^{ak^2}))=\binom{ak^2+n-1}{n}\ge \frac{a^n}{n!} k^{2n}\,.
\]
So $L_n$ is again big.

In particular, we obtain the statement as $\omega_{X^{[n]}}\cong (\omega_X)_n$ by \cite[Prop.~1.6]{NW}.
\end{proof}

\begin{remark}
If $E=\reg_C$ is the structure sheaf of a $(-2)$-curve $C\subset S$, then the associated $\IP^n$-object $P$ on $S^{[n]}$ is the structure sheaf of the subvariety $\IP^n\cong C^{[n]}\eqqcolon Z\subset S^{[n]}$; see \cite[Prop.~6.6]{Formality}. One can show for its normal bundle
$N_{Z/S^{[n]}}\cong \Omega_Z$, which means that $Z$ is the center of a Mukai flop.
Indeed, if $S$ is a K3 surface, then $S^{[n]}$ is holomorphic symplectic. On a holomorphic symplectic variety, the normal bundle of any embedded projective space of half the dimension of the ambient space is automatically isomorphic to the cotangent bundle; see \cite[Ex.~1.3(i)]{HT}. One can reduce to the case of a K3 surface since any two $(-2)$-curves on any two smooth surfaces have isomorphic analytic neighbourhoods, as follows from \cite[Satz 7]{Grauert}.

More generally, if $X$ is any $2n$-dimensional smooth projective variety with $n$ even and $\omega^{\pm 1}_X$ big together with a subvariety $Z\cong \IP^n$ with normal bundle $N_{Z/X}\cong \Omega_Z$ we have an example for \autoref{thm:main}; see \cite[Ex.~1.3(i)]{HT}.
\end{remark}

\end{document}